\theoremstyle{plain}
\newtheorem{Thm}{Theorem}
\newtheorem{Prop}[Thm]{Proposition}
\newtheorem{Lemma}[Thm]{Lemma}
\theoremstyle{definition}
\begin{document}

\title{Padlock Solitaire: A martingale trick for combinatorial enumeration} 
\author{Johan Wästlund}
\date{\today}   

\maketitle

\begin{abstract}
We introduce a one-person game that we call Padlock Solitaire which resembles the well-known clock solitaire card game. Analyzing variants of this game we obtain simple proofs of some classical results of combinatorics including ballot theorems and the enumeration of spanning trees in various graphs and hypergraphs. 
\end{abstract}

\section{Boxes and padlocks}
Suppose we have $n$ boxes labeled $1,\dots, n$, each with a padlock with a unique key. We keep the key to the first box, but put the remaining $n-1$ keys randomly into the $n$ boxes and lock them. We assume to begin with that the keys are distributed uniformly and independently. 

Using key 1 we can open box 1, and if that box contains one or more keys, we can keep opening boxes. We call this \emph{padlock solitaire}, and the condition for success, or ``winning'', is that we finally recover all the keys, equivalently unlock all boxes.

The arrangement of keys into boxes can be represented by a directed graph on vertices $1,\dots, n$ with an edge $i \to j$ if key $j$ is in box $i$ (so that opening box $i$ leads to opening box $j$). 

\begin{Prop} \label{P:tree}
We recover all the keys if and only if the distribution of keys into boxes describes a tree rooted at box 1 and directed away from the root.
\end{Prop}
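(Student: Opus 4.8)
The plan is to translate the game into a reachability question on the directed graph $G$ introduced above and then to characterize when the root reaches everything. Two structural facts set this up. First, since we keep key $1$ it lies in no box, so vertex $1$ has in-degree $0$; and since each remaining key $2,\dots,n$ sits in exactly one box, every other vertex has in-degree exactly $1$. Consequently $G$ has exactly $n-1$ edges. Second, the play itself is nothing but a directed search from vertex $1$: holding key $1$ we open box $1$, and opening box $i$ hands us precisely the keys $j$ with $i\to j$, i.e.\ the out-neighbours of $i$. Hence the boxes we can ever open are exactly the vertices reachable from $1$ along directed edges, and we win if and only if every vertex is reachable from $1$. It therefore suffices to prove that every vertex is reachable from $1$ if and only if $G$ is an arborescence rooted at $1$ and directed away from the root.

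One direction is immediate: in such an arborescence every vertex lies on a directed path from the root, so all boxes open. For the converse I would argue as follows. If every vertex is reachable from $1$, then the underlying undirected graph is connected; having $n$ vertices and only $n-1$ edges, it must then be a tree. It remains to check the orientation: starting from vertex $1$, whose in-degree is $0$, every edge at $1$ points outward; each child of $1$ thereby spends its single unit of in-degree, so all of its remaining edges must point away from $1$ as well, and induction on distance from the root shows that every edge is directed away from $1$. Thus $G$ is an out-arborescence rooted at $1$.

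The crux is the converse, and within it the one place where the hypotheses really bite is the passage from connectivity to a tree, i.e.\ the matching of the edge count $n-1$ against the in-degree profile; everything else is bookkeeping. An equivalent and perhaps more transparent route avoids the edge count altogether: if $G$ had a directed cycle $C$ then $1\notin C$ (as $1$ has in-degree $0$), and a directed path from $1$ to any $v\in C$ would meet $C$ at a first vertex $w\neq 1$, forcing $w$ to have an in-edge off $C$ in addition to its in-edge inside $C$ and hence in-degree at least $2$, a contradiction; so $G$ is acyclic, and an acyclic graph with these in-degrees in which $1$ reaches everything is precisely the desired arborescence. I would present the edge-count argument as the main line and keep this cycle argument as a remark, since it isolates exactly where the assumption of one key per box does the work.
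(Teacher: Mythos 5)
Your proposal is correct and follows essentially the same route as the paper: the boxes you can open are exactly the vertices reachable from vertex $1$, and reachability of all vertices together with the edge count $n-1$ forces a tree rooted at $1$. You simply make explicit the in-degree bookkeeping (and an optional acyclicity remark) that the paper's one-line argument leaves implicit.
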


\begin{proof}
The boxes that we can open are precisely those that have a path to them from box 1. If there is such a path to every vertex, then since there are only $n-1$ edges, the graph must be a rooted tree.
\end{proof}

\section{The Cayley formula}
\begin{Prop} \label{P:cayley}
There are $n^{n-2}$ different trees on $n$ labeled vertices.
\end{Prop}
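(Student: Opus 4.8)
The plan is to interpret $n^{n-2}$ as a winning probability. First I would fix the probabilistic model of the previous section: keys $2,\dots,n$ are placed independently and uniformly among the $n$ boxes, so there are exactly $n^{n-1}$ equally likely arrangements. By Proposition~\ref{P:tree} the winning arrangements are precisely those describing a tree rooted at box $1$ with all edges directed away from the root. Since orienting the edges of an (unrooted) labeled tree away from a prescribed root is a bijection, the number of winning arrangements equals the number $T_n$ of labeled trees on $n$ vertices. Hence $\Pr(\text{win}) = T_n/n^{n-1}$, and it suffices to prove $\Pr(\text{win}) = 1/n$, for then $T_n = n^{n-1}/n = n^{n-2}$.

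Second, I would compute this probability by playing the game one box at a time and tracking $S_k$, the number of keys in hand after $k$ boxes have been opened. We start with $S_0 = 1$ (key $1$), opening a box consumes the key used and yields its contents, and the game is won exactly when $S_k \geq 1$ for every $k \le n-1$ (so there is always a key to continue), in which case necessarily $S_{n-1}=1$. Using the principle of deferred decisions, I would argue that conditionally on the history $\mathcal{F}_k$ after $k$ boxes, each of the $n-k-S_k$ not-yet-recovered keys is independently uniform over the $n-k$ unopened boxes; thus the number of keys found in the next box is Binomial with mean $(n-k-S_k)/(n-k)$. A short calculation then gives the conditional drift
\[
  \mathbb{E}[S_{k+1}\mid \mathcal{F}_k] \;=\; S_k\cdot\frac{n-k-1}{n-k}.
\]

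Third, I would read off the compensator from this drift: the telescoping product shows that $M_k := \frac{n}{n-k}\,S_k$ is a martingale for $0\le k\le n-1$, with $M_0 = 1$. Evaluating it at the fixed time $k=n-1$ yields $1 = \mathbb{E}[M_{n-1}] = n\,\mathbb{E}[S_{n-1}] = n\,\Pr(S_{n-1}=1) = n\,\Pr(\text{win})$, which is exactly the desired $\Pr(\text{win}) = 1/n$.

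I expect the main obstacle to be the second step rather than the third. Verifying that the unrecovered keys remain conditionally uniform over the unopened boxes requires setting up the deferred-decisions argument carefully — in particular checking that revealing a box's contents does not bias the locations of the keys still hidden — and it is this clean conditional law that makes the drift, and hence the martingale $\frac{n}{n-k}\,S_k$, fall out. Spotting that this is the right quantity to track is the one genuinely creative step; once it is in hand, evaluating the martingale at the final time is routine and no further enumeration is needed.
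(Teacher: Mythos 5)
Your proposal is correct and takes essentially the same route as the paper: your martingale $M_k = \frac{n}{n-k}\,S_k$ is exactly $n$ times the paper's ratio of available-but-unused keys to locked boxes, and your deferred-decisions drift computation is precisely the paper's one-line justification (hidden keys are conditionally uniform over unopened boxes, so the ratio has zero expected change) written out in detail. The only cosmetic differences are the scaling by $n$ and evaluating the martingale at the fixed time $n-1$ --- which implicitly requires the (easy) convention that $S_k$ is absorbed at $0$ once the game is stuck --- rather than stopping it at the random time when the game ends.
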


This was established already by James Joseph Sylvester in 1857 \cite{Sylvester} and mentioned by Carl Wilhelm Borchardt \cite{Borchardt}, but is named after Arthur Cayley who devoted the paper \cite{Cayley} to it in 1889. Many beautiful proofs are known, see for instance \cite{AignerZiegler, Joyal, Pitman, Prufer, Renyi, Riordan}. The history is interesting, as the formula can also be derived from the matrix tree theorem whose roots can be traced back to Gustav Kirchhoff in 1847 \cite{KirbyEtAl, Kirchhoff}. 
 
Counting trees, we can choose arbitrarily a vertex as the root, thereby defining for every edge a direction away from the root. The Cayley formula therefore equivalently counts trees with a specified root and direction, for instance the winning starting configurations of padlock solitaire. 

There are obviously $n^{n-1}$ different ways of distributing the $n-1$ keys into the $n$ boxes. An equivalent form of the Cayley formula is therefore that the probability of winning padlock solitaire is $1/n$. We give an essentially calculation-free proof of this fact.

\begin{Thm} \label{T:simple}
If we put the keys $2,\dots, n$ uniformly and independently into the $n$ boxes, the probability of winning is $1/n$.
\end{Thm}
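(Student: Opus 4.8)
The plan is to prove that the winning probability equals $1/n$ by exhibiting a martingale, in keeping with the paper's stated method. The natural state variable as we play is the number of keys we currently hold but have not yet used — call this the ``pool'' of available keys. We start by opening box 1 with key 1; at each step we open a box and release whatever keys it contains. I would track the quantity that changes as we proceed and look for a conserved expectation.

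First I would set up the process precisely. Suppose at some stage we have opened $k$ boxes and are holding some number of unused keys; the remaining $n-k$ keys are still distributed among the $n-k$ unopened boxes. The key observation is that, \emph{conditioned on the game not yet having stalled}, the keys still hidden are uniformly distributed among the unopened boxes (plus possibly already-opened ones, depending on the exact bookkeeping). The game stalls and we lose precisely when the pool of available keys runs out before all boxes are opened. So the plan is to define a random variable depending on the current pool size and the number of unopened boxes, and show it is a martingale under the step dynamics.

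The central step I would carry out is to identify the right martingale. A promising candidate is the following: if we are holding $p$ unused keys and there are $m$ boxes still locked, consider a quantity like $p/m$ or the indicator-weighted expression that, when we open one more box and draw its (uniformly random number of) keys, has expected increment zero. Because each released key is equally likely to be any of the remaining hidden keys, opening a box changes $p$ by $(\text{keys found}) - 1$ and decreases $m$ by one; computing the conditional expectation of the found keys should make some simple ratio a martingale. I would then apply optional stopping at the moment the game ends — either all boxes opened (win) or pool empty (loss) — to read off the winning probability as $1/n$.

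The hard part will be choosing the martingale so cleanly that optional stopping directly yields $1/n$ with essentially no computation, which is the paper's selling point. The subtlety is that the number of keys in a box is \emph{not} independent of the opening history once we condition on survival, so I must argue carefully that the correct conditional distribution of newly found keys is the one that makes the chosen statistic a martingale — this is where the ``martingale trick'' does the real work and where a naive independence assumption would give the wrong answer.
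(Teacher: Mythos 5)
Your overall plan --- track the ratio of unused keys to locked boxes, show it is a martingale, and apply optional stopping --- is exactly the paper's argument, and your candidate $p/m$ is the right one (it equals $1$ minus the paper's hidden-keys-to-locked-boxes ratio, since the $m$ keys of the $m$ locked boxes split into $p$ in your hand and $H$ still hidden, so $p+H=m$). But the proposal never actually establishes the martingale property; it is deferred with ``should make some simple ratio a martingale,'' and the way you propose to establish it rests on a misconception. You say the hidden keys are uniform ``conditioned on the game not yet having stalled,'' and later that the real work is to handle the fact that box contents are not independent ``once we condition on survival.'' Conditioning on the survival event is both wrong and unnecessary: what the martingale property requires is the conditional distribution given the \emph{history} (the $\sigma$-algebra generated by the contents of the boxes opened so far), and given that history the hidden keys are \emph{exactly} i.i.d.\ uniform on the locked boxes. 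This is immediate from the i.i.d.\ uniform placement: revealing the contents of a set $S$ of opened boxes tells you, about each key not found there, only that it lies outside $S$, and by independence each such key is then uniform on the complement; the adaptive choice of which box to open next causes no bias because that choice is a function of the history, not of the hidden contents. Conditioning on the bare event ``not yet stalled,'' by contrast, genuinely would distort the distribution (keys become biased toward opened boxes) --- that is a phantom difficulty you created for yourself, and an argument organized around it would not go through cleanly.

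With the correct conditioning, the verification you postponed is one line: if $H$ keys are hidden in $m$ locked boxes, the box you open contains on average $H/m$ of them, so the new ratio has expectation $\bigl(H - H/m\bigr)/(m-1) = H/m$; equivalently $p/m$ is a martingale. Then stop the game the moment either $p=0$ (loss, value $0$) or $p=m$ (win certain, value $1$); stopping at $p=m$ rather than at the literal opening of the last box also disposes of the $0/0$ issue your proposal does not address. Optional stopping (the game ends within $n$ steps) gives $P(\text{win}) = p_0/m_0 = 1/n$. So the skeleton matches the paper, but the step you flagged as ``where the martingale trick does the real work'' is precisely the step your proposal leaves both undone and misdiagnosed.
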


\begin{proof}
The synopsis is that throughout the game, the average number of hidden keys per unopened box is a martingale, and therefore equal to our probability of not winning. But let's fill in some details:

At every moment we consider the ratio of the number of keys still locked in some box to the number of boxes that are locked. 
This ratio becomes 1 if we get stuck with all the remaining keys locked away, and 0 if we recover the last key. If we actually open the last box, the ratio will become $0/0$, but we can regard the game as finished once we find the last key.  

To establish the martingale property, notice that the unseen keys are equally likely to be in any of the remaining boxes, so that the expected change when we open one of them is zero.

Being a martingale that stops at 0 or 1 depending on the outcome, the hidden keys to unopened boxes ratio must be equal to our probability of not winning. It might be more convenient to think instead about the ratio of \emph{available but unused} keys to locked boxes, since that must consequently be our probability of winning. Starting with one out of $n$ keys, our winning probability is therefore $1/n$.
\end{proof}

\section{Rooted forests}

A generalization of Proposition~\ref{P:cayley} \cite{Cayley, Renyi, Riordan, TakacsTree} states that if $k$ of the $n$ labeled vertices are designated as roots, then there are  
\[k\cdot n^{n-k-1}\] 
forests of $n-k$ edges that connect every other vertex to one of the $k$ roots. 

This too follows from the martingale property of the keys-to-boxes ratio: If we start the game with keys $1,\dots, k$, and distribute the remaining $n-k$ keys into the $n$ boxes, the probability of winning is $k/n$. This means that out of the $n^{n-k}$ ways of distributing the $n-k$ keys, the proportion $k/n$ gives the required type of forest. 

\section{Parking functions}
Again let's start with just one key and $n$ boxes. Suppose that even if we get stuck, we open the remaining boxes (with a master key). Then there is an equivalent criterion for winning that we call the \emph{solvency condition}: 

\begin{Prop} \label{P:solvency}
We can unlock all boxes if and only if for every $k<n$, the first $k$ boxes that we open together contain at least $k$ keys.
\end{Prop}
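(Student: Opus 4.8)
The plan is to set up a simple running tally of keys and reduce the solvency condition to a statement about this tally never dropping to zero prematurely. Let me write $c_k$ for the total number of keys contained in the first $k$ boxes we open. Since we begin holding key~1 and acquire a key precisely when it turns up inside an opened box, after opening $k$ boxes we have acquired $1+c_k$ keys in total and spent exactly $k$ of them, one to open each box. First I would therefore define the \emph{balance} $a_k = 1 + c_k - k$, the number of available but unused keys, and record the immediate algebraic equivalence that $a_k \geq 1$ holds if and only if $c_k \geq k$.

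Next I would connect the balance to the dynamics of the game. As long as $a_k \geq 1$ we still hold a found key and can open another box without the master key; we become stuck exactly at the first $k<n$ for which $a_k = 0$, that is, $c_k = k-1$. Hence the game is won---all boxes opened using found keys alone---if and only if $a_k \geq 1$ for every $k = 1, \dots, n-1$, which by the equivalence above is precisely the solvency condition that the first $k$ opened boxes together contain at least $k$ keys. (The endpoint $a_n = 1 + (n-1) - n = 0$ is automatic, reflecting that we finish with no key to spare.)

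The step I expect to require the most care is the role of the master key in defining \emph{which} boxes are ``the first $k$ we open,'' since once we resort to master keys the opening order involves arbitrary choices. The point to make is that these choices never interfere with the characterization: up to the first index where solvency could fail, the found-key process forces the opening order with no master key needed, so the quantities $c_k$ are unambiguous there. If the game is lost we get stuck at some $j<n$ with $c_j = j-1 < j$, so solvency already fails at $k=j$ regardless of how the remaining boxes are later opened; conversely, if solvency holds for all $k<n$ the balance never reaches zero and the master key is never invoked. This renders the equivalence insensitive to the master-key tie-breaking and completes the argument.
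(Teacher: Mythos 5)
Your proof is correct and is essentially the paper's own argument: the paper's one-line proof (``we remain solvent in the sense of always having yet another key; if solvency fails at some $k$, we can't open more than those $k$ boxes'') is exactly your balance $a_k = 1 + c_k - k$ staying positive, which you have simply written out with explicit bookkeeping. Your additional care about the master-key tie-breaking is a sound elaboration of the same idea (though strictly speaking the found-key process does not force a unique opening order---it only forces that no master key is needed---your two-directional argument never actually relies on uniqueness, so nothing breaks).
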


\begin{proof}
If the solvency condition holds, we remain ``solvent'' throughout the game in the sense of always having yet another key, while if it fails for some $k$, we can't open more than at most those $k$ boxes.
\end{proof}

We already know that the probability of winning padlock solitaire is $1/n$. By symmetry, the probability that the solvency condition holds must be the same, $1/n$, if we just open the boxes from left to right using the master key. 

This fact lets us enumerate so-called \emph{parking functions}. These were introduced in \cite{KonheimWeiss}, see also \cite{RiordanParking} and the set of exercises of Section 6.4 of \cite{Knuth}. 
A parking function from $\{1,\dots, n-1\}$ to $\{1,\dots, n-1\}$ is a function where, for each $k\leq n-1$, at least $k$ elements are mapped to $\{1,\dots, k\}$. If \emph{a priori} we allow the value $n$ (even though it can't occur for a parking function), the probability that a uniformly chosen function $\{1,\dots, n-1\} \to \{1,\dots, n\}$ is a parking function is $1/n$, and we conclude that there are $n^{n-2}$ such functions.

\section{Independence is not quite needed}
An interesting aspect of the proof of Theorem~\ref{T:simple} is that we don't quite need to assume that the keys are distributed independently of each other. 

\begin{Thm} \label{T:dependent}
Suppose we keep key 1 and put keys $2,\dots, n$ into boxes $1,\dots, n$ in such a way that whenever we condition on the contents of a set of boxes, each key not found in those boxes is distributed uniformly between the remaining ones. Then the probability of winning is still  $1/n$. 
\end{Thm}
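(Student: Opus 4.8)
The plan is to re-run the martingale argument from the proof of Theorem~\ref{T:simple}, checking that the weakened hypothesis is exactly what is needed to keep the hidden-keys-to-unopened-boxes ratio a martingale. First I would fix notation: after we have opened $j$ boxes, let $H_j$ denote the number of keys still locked inside the $n-j$ unopened boxes, and set $M_j = H_j/(n-j)$. Initially no box is open, so $H_0 = n-1$ and $M_0 = (n-1)/n$. Let the filtration $\mathcal{F}_j$ be generated by the contents of the boxes opened so far. The entire point is that independence entered the earlier proof only through one symmetric conditional expectation, and the stated hypothesis supplies precisely that.

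The heart of the matter is therefore a single conditional-expectation computation. When we open the $(j+1)$-st box, we pick some unopened box $i$ whose key has already surfaced; crucially, the identity of $i$ is a function of $\mathcal{F}_j$ (together with any private coin flips we might use to break ties), so it carries no information about the contents of box $i$ beyond what $\mathcal{F}_j$ already contains. The hypothesis says that, conditioned on $\mathcal{F}_j$, each of the $H_j$ still-hidden keys is distributed uniformly among the $n-j$ unopened boxes. By this symmetry the conditional expected number of hidden keys landing in box $i$ equals $H_j/(n-j)$, regardless of which unopened box $i$ turns out to be. Hence $E[H_{j+1}\mid \mathcal{F}_j] = H_j - H_j/(n-j) = H_j(n-j-1)/(n-j)$, and dividing by $n-j-1$ gives $E[M_{j+1}\mid\mathcal{F}_j] = M_j$, so $(M_j)$ is a martingale exactly as before.

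It then remains to read off the answer. The process stops either when we recover the last key, at which point $H_j=0$ and $M_j=0$, or when we get stuck with every available key already used up, at which point the hidden keys exactly fill the locked boxes and $M_j=1$. Since $(M_j)$ is a bounded martingale run for at most $n$ steps and stopped at $0$ or $1$, optional stopping gives that the probability of \emph{not} winning equals its initial value $(n-1)/n$, so the winning probability is $1/n$.

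I expect the only delicate point to be justifying that our choice of which box to open next does not bias its contents. One must check that the decision rule is measurable with respect to the revealed contents (plus independent external randomness), so that conditioning on ``box $i$ is the one we open'' adds nothing to $\mathcal{F}_j$ and the symmetric distribution of the remaining keys over the unopened boxes is preserved. Everything else is word-for-word identical to the independent case.
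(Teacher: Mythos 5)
Your proposal is correct and follows essentially the same route as the paper: the paper's own (one-sentence) proof simply observes that the hypothesis guarantees each hidden key is uniform over the unopened boxes at every stage, so the hidden-keys-to-unopened-boxes ratio remains a martingale exactly as in Theorem~\ref{T:simple}. Your write-up just makes explicit the filtration, the optional-stopping step, and the measurability of the box-selection rule, all of which the paper leaves implicit.
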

\begin{proof}
At any time, each hidden key is equally likely to be in any of the unopened boxes, and therefore again the average number of keys in those boxes is a martingale.
\end{proof}


A simple example covered by Theorem~\ref{T:dependent} but not by Theorem~\ref{T:simple} is distributing the keys according to a permutation (no two keys in the same box), except that we still keep key 1. We recover all keys if and only if the permutation is cyclic, which happens with probability $1/n$. 

Again we can modify the game by retaining $k$ keys, and conclude that for a uniform random permutation of $\{1,\dots,n\}$, the probability that every cycle contains one of $k$ given numbers is $k/n$. This was the topic of the blog post \cite{PossiblyWrong} (whose author seems to prefer to remain anonymous) that I recently stumbled upon. What led to this note was the observation that the winning probability stays the same even if we allow several keys in the same box.  

Notice though that we must assume something more than just uniform distribution of each key individually: Suppose $n=3$ and we choose uniformly between the three options of putting one of the keys 2 and 3 in its own box and the other in the first box, or putting keys 2 and 3 in each other's boxes (in other words the three odd permutations). Then both keys 2 and 3 are distributed uniformly between the three boxes, but we can never recover both of them.  

\section{Trees in hypergraphs}
The Cayley formula can be generalized to spanning trees in so-called uniform hypergraphs. We show how this works in the case of spanning by triangles, but the result can easily be generalized to hyperedges connecting more than three vertices. 

Suppose there are $2n+1$ labeled vertices and we wish to connect them by $n$ hyperedges, each of which can be thought of as a triangular membrane connecting three of the vertices. If we insert the hyperedges one at a time, each one can decrease the number of components by at most 2, and therefore $n$ hyperedges barely suffice to make the structure connected.  

\begin{Prop}
The number of ways of choosing $n$ triangular hyperedges to connect $2n+1$ labeled vertices is
\begin{equation} \label{Three} 1\cdot 3 \cdot 5 \ \cdots \ (2n-1)\cdot (2n+1)^{n-1}.\end{equation}
\end{Prop}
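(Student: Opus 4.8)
The plan is to extend the padlock solitaire martingale argument to hyperedges. The key idea of the original proof is that the "hidden keys per unopened box" ratio is a martingale equal to the probability of failure. I want to set up an analogous game where each move consumes one hyperedge and opens (at most) two new vertices, so that the same conditional-uniformity reasoning applies.

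Let me think about how to design the game. We have $2n+1$ vertices and $n$ triangular hyperedges, and I want a "rooted directed" version analogous to the rooted tree. By analogy with the tree case where we fixed a root and pointed edges away from it, here I would fix a root vertex (say vertex $1$) and orient each triangle so that it has one "incoming" vertex and two "outgoing" vertices — that is, each hyperedge is entered through one already-opened vertex and, when traversed, releases keys to its other two vertices. So the directed structure is: every non-root vertex receives exactly one "key" (it is an outgoing vertex of exactly one oriented hyperedge), and we open vertices starting from the root. The winning condition is that we open all $2n+1$ vertices. I would first prove the analogue of Proposition~\ref{P:tree}: that we recover everything precisely when the oriented hyperedge structure is a spanning "hypertree" directed away from vertex $1$, and that the number of such rooted directed structures relates to \eqref{Three} by a symmetry/overcounting factor.

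The central step is the probabilistic game and its martingale. Here is the setup I would use. We keep the key to box $1$. Each of the other $2n$ keys is placed into a box, but now the keys come in pairs tied to the $n$ hyperedges: opening a box reveals a hyperedge, and a hyperedge hands us the keys to its \emph{two} other boxes. So at each step, opening one box (consuming the incoming slot of a hyperedge) releases two new keys. The natural invariant is again the ratio of hidden keys to locked boxes. I would argue that when we open a box, we remove that box from the "locked" count and, because each key is conditionally uniform over the unopened boxes, the expected change in the hidden-keys-per-locked-box ratio is zero — this is the martingale property, verified exactly as in Theorem~\ref{T:simple} via the conditional-uniformity hypothesis of Theorem~\ref{T:dependent}. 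The ratio stops at $0$ on winning and at $1$ on getting stuck, so the starting value equals the failure probability, and winning probability is one minus that.

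The arithmetic then has to produce \eqref{Three}. Starting with one key (to box $1$) among $2n+1$ boxes, the initial hidden-keys-to-locked-boxes ratio is $2n/(2n+1)$, giving winning probability $1/(2n+1)$. The total number of ways to lay down the $n$ labeled-but-unordered triangles — the sample space — is what I must count next: the keys $2,\dots,2n+1$ are grouped into $n$ pairs, each pair plus an incoming box forming a hyperedge, and the double factorial $1\cdot3\cdot5\cdots(2n-1)$ is exactly the number of ways to partition $2n$ keys into $n$ unordered pairs. Multiplying the pairing count by the number of placements and by the winning probability $1/(2n+1)$ should yield \eqref{Three}, after carefully accounting for which configurations are counted with directions and roots versus undirected. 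The main obstacle I anticipate is precisely this bookkeeping: making the sample space match the double-factorial structure and confirming that each unrooted, undirected hyperedge-tree corresponds to a clean number of rooted directed solitaire configurations, so that the probability $1/(2n+1)$ converts cleanly into the stated product. Establishing the martingale itself should be routine given Theorem~\ref{T:dependent}; the delicate part is the combinatorial translation between the game's configurations and the objects \eqref{Three} enumerates.
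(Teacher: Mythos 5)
Your proposal takes essentially the same route as the paper: pair the $2n$ keys in one of the $1\cdot 3\cdot 5\,\cdots\,(2n-1)$ ways, distribute the $n$ pairs uniformly and independently over the $2n+1$ boxes, invoke the conditional-uniformity condition of Theorem~\ref{T:dependent} to get winning probability $1/(2n+1)$, and multiply by the size of the sample space to obtain \eqref{Three}. The bookkeeping you flag as the delicate part is in fact trivial: rooting at vertex $1$ orients a spanning hypertree uniquely (each hyperedge's two far keys are paired inside the box of its vertex nearest the root), so winning configurations correspond bijectively to connected hypergraphs and there is no overcounting factor.
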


I believe that this has been known for some time, but I haven't found it stated explicitly other than in \cite{Siva}.
Here we show how to derive \eqref{Three} from padlock solitaire. 

We assume that there are $2n+1$ boxes, one for each vertex. As usual we keep key 1, but before distributing the remaining $2n$ keys into the boxes, we pair them up in one of the $1\cdot 3\cdot 5\ \cdots \ (2n-1)$ ways. Then we distribute the $n$ \emph{pairs} of keys uniformly and independently into the $2n+1$ boxes. Notice that the condition of Theorem~\ref{T:dependent} is satisfied. 

Each way of pairing and distributing the keys can be described by the collection of triples $(i,j,k)$ such that keys $j$ and $k$ are paired up and placed in box $i$. Again it can be verified that we recover all the keys if and only if the resulting structure is connected, and by Theorem~\ref{T:dependent} this happens with probability $1/(2n+1)$.

Therefore out of the $1\cdot 3 \cdot 5 \ \cdots \ (2n-1)\cdot (2n+1)^n$ different ways of pairing up the keys and then distributing the $n$ pairs into the $2n+1$ boxes, a fraction $1/(2n+1)$ will correspond to connected hypergraphs. We conclude that the number of such hypergraphs is given by \eqref{Three}.

\section{Non-uniform key distribution} \label{S:nonuniform}
There is a straightforward generalization of Theorem~\ref{T:dependent} to distributions where not all boxes are equally likely to hold the hidden keys.  

\begin{Thm} \label{T:nonuniform}
Suppose that $p_1+\dots +p_n = 1$ and that each key $2,\dots, n$ is put into box $i$ with probability $p_i$. Suppose moreover that if we condition on the contents of a set of boxes, each key which is not in any of those is distributed between the remaining ones with probabilities proportional to the initial probabilities $p_i$. 
Then the probability of unlocking all boxes is $p_1$.
\end{Thm}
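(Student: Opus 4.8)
The plan is to mimic the martingale argument of Theorems~\ref{T:simple} and \ref{T:dependent}, but to replace the quantity ``number of hidden keys in unopened boxes'' with a suitably \emph{weighted} count, where each hidden key is weighted so that the expected change on opening a box is again zero. First I would introduce, at any stage of the game, the set $U$ of currently unopened boxes and define the \emph{weight} of this set as $P(U) = \sum_{i \in U} p_i$. The natural candidate for the martingale is then the quantity
\[
  M = P(U) \cdot \frac{\text{number of keys still hidden}}{\text{number of times a key must still fall into } U},
\]
but the cleaner formulation I would aim for is the following: at each moment, let $H$ be the set of hidden keys and define
\[
  M = \sum_{\text{hidden keys}} 1,
\]
no---the right object is the conditional probability, given everything opened so far, that each individual hidden key lies in $U$. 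The plan is to show that $M = \sum_{\text{hidden keys}} \Pr[\text{key} \in U \mid \text{history}]$ equals $P(U)$ times (number of hidden keys), because by hypothesis each hidden key lies in box $i \in U$ with probability proportional to $p_i$, hence lies \emph{somewhere} in $U$ with total conditional probability equal to $1$ (every hidden key must be in some unopened box). That observation is the crux: once a key is hidden and unaccounted for, it is certainly in $U$, so the ``expected number of hidden keys in $U$'' is trivially the number of hidden keys. The genuine martingale must instead track a weighted balance between keys and box-weights.

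More precisely, the quantity I would track is
\[
  M = \bigl(\text{number of hidden keys}\bigr) \;-\; \bigl(\text{number of unopened boxes}\bigr) + P(U),
\]
or, following the spirit of the earlier proofs, the ratio of the total $p$-weight of boxes still needing a key to the total $p$-weight of unopened boxes. The key computation is to verify that opening one box produces zero expected change. When we open a box $b \in U$ and find it contains some set $S$ of keys, the set $U$ loses the weight $p_b$, the hidden-key count drops by $|S|$, and each of those $|S|$ keys is now accounted for; meanwhile the conditioning hypothesis guarantees that the \emph{remaining} hidden keys redistribute over $U \setminus \{b\}$ with probabilities still proportional to the $p_i$. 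I would compute $\mathbb{E}[\Delta M]$ using exactly this proportionality: the probability that a given hidden key sat in the box we just opened is $p_b / P(U)$, and summing the contributions shows the expected decrease in the key-weight balances the expected decrease in box-weight.

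The final step is the stopping argument, identical in structure to Theorem~\ref{T:simple}. The game ends either when we recover the last key---at which point $U = \emptyset$ in the sense that every box is reachable and the relevant ratio hits the ``winning'' value---or when we get stuck with hidden keys locked away, giving the ``losing'' value. Since $M$ is a bounded martingale stopped at two values corresponding to the two outcomes, its initial value equals the losing (or, reformulated, winning) probability; starting from the configuration in which only key~$1$ is in hand and box~$1$ has weight $p_1$, I expect the winning probability to come out to exactly $p_1$.

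The main obstacle I anticipate is pinning down \emph{precisely which} weighted ratio is a martingale, since the naive ``expected number of hidden keys in $U$'' degenerates (it is automatically the full count), and the uniform-case argument hid this subtlety because all weights were equal. The delicate point is to set up the two competing weights---the $p$-weight of unfilled boxes versus a corresponding weight attached to the hidden keys---so that the proportional-redistribution hypothesis makes the expected increment vanish, and to check that the boundary values of this ratio are genuinely $0$ and $1$ (or $p_1$ and its complement) rather than some other constants. Verifying the martingale identity in the non-uniform setting, where opening a high-$p_i$ box removes more weight than a low-$p_i$ box, is where the real care is needed.
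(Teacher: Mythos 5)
Your settled candidate---the ratio of the total $p$-weight of boxes whose keys are still hidden to the total $p$-weight of unopened boxes---is exactly the martingale the paper uses, and your verification (each hidden key lies in the box just opened with probability $p_b/P(U)$, so the relative expected decrease of the numerator matches the relative decrease of the denominator) together with optional stopping at the values $0$ and $1$ is precisely the paper's proof, giving losing probability $1-p_1$. The earlier false starts (the degenerate ``expected number of hidden keys in $U$'' and the additive candidate, which is not in fact a martingale) are discarded by you anyway, so the final argument is correct and essentially identical to the paper's.
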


\begin{proof}
The ratio 
\[ \frac{\sum p_i\ \text{(key $i$ locked in)}}{\sum p_j\ \text{(box $j$ locked)}}\] is a martingale, and is therefore equal to the probability of not winning. 
\end{proof}

Simple examples of this type of distribution can be simulated as solitaire card games. Suppose for instance that we shuffle a standard deck of 52 cards and deal 13 piles of three cards each. The piles represent boxes labeled Ace, $2, \dots, 10$, Jack, Queen and King, and the cards of the heart suit are the keys according to the labeling.  

We start with the thirteen remaining cards on our hand. Everything except hearts is thrown into a discard pile, but each card of the heart suit lets us pick up the corresponding pile and obtain three new cards. We win if we recover all hearts and pick up everything. 

If we regard our initial hand of 13 cards as box 1, this is an example of padlock solitaire, and according to Theorem~\ref{T:nonuniform}, our winning probability is $13/52 = 1/4$. 

We can cast the whole proof in terms of card play by noting that since all we do is turn up cards of a shuffled deck, the proportion of hearts among the unseen cards is a martingale. This proportion starts at $1/4$ before we even look at our hand, and ends at 0 if we win, and at $1/3$ (exactly!) if we lose.

As a ``brain teaser'', we suggest the problem of changing the sizes of the thirteen piles (even allowing some piles to be empty) in order to maximize the probability of winning. The answer is that the winning probability depends only on the number of cards in our initial hand, and not on the sizes of the piles. In particular it stays the same if we put all 39 cards not in our hand into the \emph{Ace} pile, and in that case we obviously win if and only if the heart ace is in our hand. 

\section{Clock solitaire}
Our arguments bear strong resemblance to the analysis of the well-known ``clock solitaire'' (also known as ``clock patience'' and under other names like ``travellers''). A fact that has been rediscovered many times is that for this game, the probability of winning is $1/13$. We turn cards over until we have seen all four kings, winning if this happens at the very last card. The order in which we turn the cards over is governed by the cards we see, but the probability of winning is still the same as the probability that the bottom card of a shuffled deck is a king. One way of convincing oneself of the correctness of this conclusion is to note that the proportion of kings among the unseen cards is a martingale.  

A similar argument is explored in the game ``Next Card Red'' in \cite{Winkler}.


\section{Trees with given degree sequence}
Using Theorem~\ref{T:nonuniform} we can count trees on $n$ labeled vertices with prescribed degrees. For the history of this and similar results we refer to \cite{Moon}.

\begin{Prop}
The number of trees on $n$ labeled vertices with prescribed degrees $d_1, \dots, d_n$ is given by the multinomial coefficient
\[ \binom{n-2}{d_1-1,\dots,d_n-1}.\]
\end{Prop}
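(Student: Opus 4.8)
The plan is to use the padlock solitaire framework with the non-uniform distribution of Theorem~\ref{T:nonuniform}, choosing probabilities that encode the degree sequence. The key observation is that in padlock solitaire, when we open a box, the keys inside it become available, and the degree $d_i$ of vertex $i$ in the resulting tree corresponds to something we can control by how often box $i$ is ``pointed to.'' Specifically, I would set up the distribution so that each of the $n-1$ keys $2,\dots,n$ independently lands in box $i$ with probability $p_i$ proportional to $d_i$; the natural choice is $p_i = (d_i-1)/(n-2)$, since $\sum_i (d_i-1) = (2(n-1)) - n = n-2$ for any tree (using $\sum d_i = 2(n-1)$). These $p_i$ are nonnegative and sum to $1$, so Theorem~\ref{T:nonuniform} applies and tells us the probability of winning is $p_1 = (d_1-1)/(n-2)$.

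Next I would translate this probability statement into a count. The number of keys in box $i$ at the end of the game equals the out-degree of vertex $i$ in the directed tree (directed away from root $1$), and this is $d_i - 1$ for all non-root vertices and $d_1$ for the root — wait, in the rooted directed tree the out-degree of the root is $d_1$ and the out-degree of every other vertex is its degree minus one (for the incoming edge). So winning configurations are exactly the directed trees rooted at $1$ with the prescribed in-box-counts $d_i-1$ for each box $i$, except box $1$ which receives $d_1 - 1$ keys as well since box $1$'s incoming edge doesn't exist but... I need to be careful: the correct accounting is that the number of keys placed in box $i$ across the whole game is the number of children of vertex $i$, which is $d_i$ for the root and $d_i - 1$ for every other vertex. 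To make the symmetric choice $p_i \propto (d_i - 1)$ work uniformly, I would instead root the count correctly and verify the arithmetic matches.

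The cleanest route is to compute the total probability by summing over placements directly and comparing. Under the distribution where each key independently goes to box $i$ with probability $p_i = (d_i-1)/(n-2)$, a specific tree $T$ with the prescribed degrees and directed away from root $1$ receives, in box $i$, exactly $c_i$ keys where $c_i$ is the number of children of $i$; the probability of that exact placement is $\prod_i p_i^{c_i}$ times the multinomial factor for which labeled keys go where. Summing over all such trees and equating the total to $p_1$ (the winning probability from Theorem~\ref{T:nonuniform}) yields, after cancelling the common factor $\prod_i p_i^{c_i} = \prod_i ((d_i-1)/(n-2))^{d_i-1}$ which is the same for every tree with the given degree sequence, the number of trees as the ratio of $p_1$ to this common probability times the labeling count. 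I expect the algebra to collapse to the multinomial coefficient $\binom{n-2}{d_1-1,\dots,d_n-1}$.

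The main obstacle will be the bookkeeping of out-degrees versus prescribed degrees and ensuring the probability weight $\prod_i p_i^{c_i}$ is genuinely identical across all trees with the fixed degree sequence — this requires that the number of children $c_i$ equal $d_i - 1$ simultaneously for the root and all leaves, which forces the careful reconciliation of the root's degree with its child count. I would handle this by noting that the solitaire keeps key $1$ in hand rather than placing it, so box $1$ receives $d_1 - 1$ keys just like every other box receives $d_i - 1$ keys, making the weight uniform, and then the counting of labeled assignments produces exactly the multinomial coefficient.
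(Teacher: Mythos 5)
There are two genuine gaps in your argument. The first and most serious: with \emph{independent} placement of keys according to $p_i=(d_i-1)/(n-2)$, the winning event consists of \emph{all} trees rooted at box~1, not only those with the prescribed degree sequence. Theorem~\ref{T:nonuniform} then gives $p_1=\sum_T \prod_i p_i^{c_i(T)}$, where the sum runs over every rooted tree $T$ and $c_i(T)$ is the number of children of $i$ in $T$. Trees with other degree sequences carry different (nonzero, in general) weights and contribute to this sum, so you cannot isolate the trees you want by dividing $p_1$ by a ``common factor'' --- the factor $\prod_i p_i^{c_i(T)}$ is only constant \emph{within} a fixed degree sequence, while the equation mixes all of them. (Your argument could in principle be salvaged by viewing the identity as a polynomial identity in indeterminates $x_1,\dots,x_n$, namely $\sum_T\prod_i x_i^{c_i(T)}=x_1(x_1+\cdots+x_n)^{n-2}$, and extracting the coefficient of $x_1^{d_1}x_2^{d_2-1}\cdots x_n^{d_n-1}$; but that is a different and longer argument than the one you wrote, and plugging in a single numeric probability vector cannot do the job.) The paper's proof avoids this entirely by \emph{conditioning on the exact number of keys in each box}: $d_1$ keys in box~1 and $d_i-1$ keys in box $i$ for $i\geq 2$. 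This conditioned distribution still satisfies the hypothesis of Theorem~\ref{T:nonuniform} (with $p_i$ proportional to the prescribed counts), every winning configuration then automatically has the prescribed degrees, the winning probability is $d_1/(n-1)$, and multiplying by the $\binom{n-1}{d_1,d_2-1,\dots,d_n-1}$ admissible placements gives the multinomial coefficient.

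The second gap is your accounting at the root, and it is not a minor bookkeeping slip: in a tree rooted at vertex~1 and directed away from the root, the keys in box $i$ are the children of $i$, so box~1 contains $d_1$ keys (the root has no parent, hence all $d_1$ of its edges point to children), while box $i$ for $i\neq 1$ contains $d_i-1$ keys. Keeping key~1 in hand means key~1 is placed \emph{nowhere}; it does not reduce the number of keys landing \emph{in} box~1. Your proposed reconciliation (``box~1 receives $d_1-1$ keys just like every other box'') is therefore false, and it is exactly this asymmetry that produces the correct arithmetic: the winning probability $d_1/(n-1)$ times $\binom{n-1}{d_1,d_2-1,\dots,d_n-1}$ collapses to $\binom{n-2}{d_1-1,\dots,d_n-1}$. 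A further small error: a directed tree already determines which labeled key sits in which box, so there is no extra ``multinomial factor for which labeled keys go where'' in the probability of a given tree --- that probability is exactly $\prod_i p_i^{c_i(T)}$.
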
 

\begin{proof}
We play padlock solitaire keeping key 1, and conditioning on exactly $d_1$ keys in box 1 and $d_i-1$ keys in box $i$ for $2\leq i\leq n$. Notice that these numbers must sum to $n-1$ i order for $d_1,\dots, d_n$ to be the degree sequence of a tree, and that the distribution satisfies the condition of Theorem~\ref{T:nonuniform}. 

In order for us to win the game, the location of the keys must describe a tree rooted at box 1. Disregarding the orientation of the edges, there will be, for each box $i$, one edge to every box whose key is in box $i$, and for $i\neq 1$, one edge to the box that contains key $i$. Therefore the winning starting positions are precisely the trees that we wish to count. By Theorem~\ref{T:nonuniform} the probability of winning is $d_1/(n-1)$, and therefore out of the 
\[ \binom{n-1}{d_1, d_2-1,\dots,d_n-1}\] ways of distributing the keys according to the given constraints, the number that describe a tree is
\[ \frac{d_1}{n-1} \cdot \binom{n-1}{d_1, d_2-1,\dots,d_n-1} =  \binom{n-2}{d_1-1,\dots,d_n-1}.\] 
\end{proof}

\section{Parentheses and Catalan numbers}
A string of length $2n$ consisting of $n$ left-parentheses and $n$ right-parentheses is \emph{well-formed} 
 if, reading from left to right, we never see an excess of right-parentheses. The number of well-formed strings of $n$ pairs of parentheses is the $n$:th Catalan number
\[ C_n = \frac{(2n)!}{n!(n+1)!},\]
named after Eugène Catalan who established this result in 1838 \cite{Catalan}. For the history of the Catalan number sequence we refer to \cite{Pak}.

To cast Catalan's result in terms of a solitaire card game as in Section~\ref{S:nonuniform}, suppose we shuffle a deck of $n$ red and $n+1$ black cards and deal into $n+1$ piles, the first one consisting of a single card and the others of two cards each. The first pile is the one to which we have the key, and the $n$ red cards represent the keys to the remaining $n$ piles.

In order for the solvency condition to hold, we must not until we turn over the very last card get a situation where among the cards we have seen there is a black majority. In particular the first card must be red.

According to Theorem~\ref{T:nonuniform}, the probability of winning is $1/(2n+1)$. By symmetry, the probability that the solvency condition holds is the same if we turn the cards over from left to right (starting with the single-card pile). 

We conclude that the probability of getting a well-formed parenthetical expression followed by a final unmatched black card is $1/(2n+1)$, and that therefore the number of well-formed strings of $n$ pairs of parentheses is
\[ \frac1{2n+1}\cdot \binom {2n+1}{n} = \frac{(2n)!}{n!(n+1)!} = C_n.\]

Curiously, we can derive the same result from a different distribution of the keys, again into $n+1$ boxes: We shuffle a deck of $n$ red and $n$ black cards, again letting the red cards represent the $n$ hidden keys. Every time we open a box, we deal cards until we get a black one. The red cards before that are the keys in the box. Such a distribution is in fact invariant under permutations of the boxes, and again we win provided we never see a majority of black cards. The winning probability is now $1/(n+1)$, which gives the alternative expression \[C_n = \frac1{n+1}\cdot \binom{2n}{n}\] for the Catalan numbers, but we omit the details.

 \section{Ballot theorems}
The enumeration of parenthetical expressions belongs to the classical family of \emph{ballot theorems}. Here we give a couple of examples of solitaire card games where the analysis leads to ballot-style results. 

Recall the game of Section~\ref{S:nonuniform} where we deal 13 three-card piles and retain 13 cards. Notice that if we get stuck, the remaining piles will contain exactly the set of hearts (key cards) of the labels of those piles themselves. In particular, exactly one third of the unseen cards will be hearts.

As in the folklore analysis of clock solitaire, we can play a ``lazy'' version, not actually dealing the piles but instead giving ourselves thirteen cards and using the remaining deck as a talon. We play our cards to a discard pile, drawing three new cards from the talon every time we play a heart, and we win if we finish it. Since the solvency condition remains the same in the lazy version, the winning probability is still $13/52 = 1/4$.

Again whenever we lose, exactly one third of the cards remaining in the talon are hearts. There is also a converse: If we peek at the cards from the bottom of the talon three by three, then whenever we find that a third (or more) of the cards from the bottom are hearts, it's clear that we can't win: There won't be enough hearts earlier on to dig that deep into the talon. 

We conclude that if we turn over the cards of a shuffled deck one by one, the probability that the proportion of hearts ever reaches $1/3$ (or more) is exactly $3/4$. This is a special case of a ballot theorem stated by Émile Barbier in 1887 \cite{AddarioBerryReed, Barbier, Renault}.

We  can even establish a ``non-uniform'' ballot theorem. Again we consider a simple example using a standard deck. Suppose we deal ourselves 12 cards and place the remaining 40 as a talon. We play to a discard pile, but this time we pick up new cards according to the ``high-card point'' scale of bridge: A jack (of any suit) gives 1 new card, a queen 2, a king 3 and an ace 4 new cards. Again we win if we finish the talon. Notice that the total number of high-card points is 40, the same as the initial number of cards in the talon.

By the familiar analysis, the winning probability is now $12/52 = 3/13$, and whenever we lose, the remainder of the talon will contain exactly as many high-card points as cards. Consequently if we turn cards over from the bottom, the probability of getting a set of cards with at least as many high-card points as cards is exactly $40/52 = 10/13$.  
 This is a special case of a theorem proved independently by J.~C.~Tanner \cite{Tanner}, Meyer Dwass \cite{Dwass}, and Lajos Takács \cite{Takacs} in 1961--62. See also Theorem~2 in the survey \cite{AddarioBerryReed}.

\section{Spanning trees in bipartite graphs} \label{S:bipartite}

The following enumeration of spanning trees in a complete bipartite graph was established in \cite{Austin, Scoins}, see also \cite{HartsfieldWerth, Jaworsky, KleeStamps, Lewis, Moon}.

\begin{Prop} \label{P:bipartite}
The number of spanning trees in the complete bipartite graph $K_{m, n}$ is \[m^{n-1}\cdot n^{m-1}.\] 
\end{Prop}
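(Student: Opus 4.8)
The plan is to play a two-colored version of padlock solitaire. I set up $m$ boxes for the vertices of one side (call them A-boxes) and $n$ boxes for the other side (B-boxes). Since every edge of a spanning tree of $K_{m,n}$ joins the two sides, I think of the key to an A-box as living in some B-box and vice versa: an edge between A-vertex $j$ and B-vertex $i$ is recorded either as ``A-key $j$ sits in B-box $i$'' or as ``B-key $i$ sits in A-box $j$,'' depending on orientation once we root. I root every tree at A-vertex 1 and orient edges away from the root; then each non-root A-vertex has a B-parent and each B-vertex has an A-parent. Keeping A-key 1, I distribute the remaining $m-1$ A-keys into the $n$ B-boxes and all $n$ B-keys into the $m$ A-boxes, uniformly and independently. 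As in Proposition~\ref{P:tree}, opening boxes starting from A-box 1 recovers all keys exactly when the configuration is a spanning tree rooted at A-vertex 1, and rooting at a fixed vertex is a bijection with unrooted spanning trees. There are $n^{m-1}\cdot m^{n}$ configurations in all, so it remains to show that the winning probability is $1/m$; the count $m^{n-1}n^{m-1}$ then follows at once.

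The martingale needs a new idea, because the single ratio ``hidden keys per unopened box'' is no longer a martingale: opening an A-box reveals B-keys, opening a B-box reveals A-keys, and the two have different expected yields. Instead I track two ratios separately,
\[ X=\frac{\text{hidden A-keys}}{\text{unopened B-boxes}}, \qquad Y=\frac{\text{hidden B-keys}}{\text{unopened A-boxes}}. \]
By the exchangeability hypothesis of Theorem~\ref{T:dependent} (satisfied here, since each undiscovered A-key stays uniform over the unopened B-boxes, and likewise for B-keys), each of $X$ and $Y$ is a martingale by exactly the computation in Theorem~\ref{T:simple}. The crucial observation is that $X$ changes only when we open a B-box and $Y$ only when we open an A-box, so on every move exactly one of them has a mean-preserving random jump while the other stays constant. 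Hence the product $Z=XY$ is again a martingale: conditioning on the past, $E[X_{t+1}Y_{t+1}]$ equals $X_t\,E[Y_{t+1}]=X_tY_t$ (or the symmetric identity), since the unchanged factor is already known and the changing one has conditional mean equal to its current value.

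Now I read off the boundary values. Initially $X=(m-1)/n$ and $Y=n/m$, so $Z$ starts at $\tfrac{m-1}{m}$. If the game gets stuck, the unopened boxes form the non-root part of the functional graph; every unopened box then has its key locked inside another unopened box of the opposite side, which forces (hidden A-keys)~$=$~(unopened A-boxes) and (hidden B-keys)~$=$~(unopened B-boxes). Since a cycle in a bipartite graph meets both sides, at least one box of each side stays closed, so both denominators are positive and $Z=\frac{H_AH_B}{U_AU_B}=\frac{U_AU_B}{U_AU_B}=1$ on the losing event. On the winning event $Z=0$, because as soon as one of the two key-types is completely recovered the numerator of $Z$ vanishes. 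Optional stopping then gives $\tfrac{m-1}{m}=P(\text{lose})$, hence $P(\text{win})=1/m$ and the desired formula.

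The key idea is thus the passage from the single ratio to the \emph{product} of the two colored ratios, and the main technical obstacle is the winning boundary value, i.e.\ the $0/0$ phenomenon already present in Theorem~\ref{T:simple}. I would stop the martingale the instant one of the two key-types is exhausted; generically the other color still has a hidden key, which keeps both denominators positive and makes $Z=0$ unambiguous. The only awkward case is when recovering the last key of one color simultaneously closes the last box of the other color; there I would either choose the order of opening so as to defer that box, or simply adopt the convention of Theorem~\ref{T:simple} that the game is finished the moment the final key is found, with $Z=0$.
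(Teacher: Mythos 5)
Your proof is correct, and the two trickiest points are handled soundly: the product $Z=XY$ is a martingale because each move perturbs exactly one factor while the other is already determined, and the boundary values work out ($Z=1$ when stuck, since then hidden keys and unopened boxes coincide on each side and both sides remain nonempty; $Z=0$ at the first moment one color of keys is exhausted, at which point both denominators are still positive). However, your route differs from the paper's own proof of this proposition. The paper (Theorem~\ref{T:simpleBipartite}) plays the game in \emph{rounds}, using all available keys at once, so that the player alternates between holding only row-$A$ keys and only row-$B$ keys; it then tracks the single ratio of available keys to unopened boxes in the current row, which is a martingale from round to round because each hidden key of the other row (and those keys are in bijection with that row's unopened boxes) lies in one of the boxes about to be opened with exactly that probability. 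That argument is shorter and sidesteps your $0/0$ bookkeeping, since the ratio is only inspected at the ends of rounds. Your product martingale is instead precisely the $k=2$ case of the paper's later Lemma~\ref{L:multipartite}: your $Z=XY$ equals $1-f_2(X_1,X_2)=X_1X_2$, the quantity the paper uses for the multipartite generalization in Section~\ref{S:multipartite} and again for nilpotent matrix products in Proposition~\ref{P:bipartiteNilpotent}. So what you lose in brevity you gain in generality: your martingale works one box at a time with no alternation structure, and it is the argument that scales to $k$ parts, whereas the round-based ratio is tailored to the bipartite case.
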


We can prove Proposition~\ref{P:bipartite} through a bipartite version of padlock solitaire. Suppose there are two rows of boxes labeled $A_1,\dots, A_m$ and $B_1,\dots, B_n$. Each key from row $A$ is thrown into a randomly chosen box of row $B$ and vice versa, except that we keep the key to box $A_1$ to start the game.

Again the winning positions can be represented as trees rooted at box $A_1$ and spanning the set of all boxes, but now only the trees that respect the bipartition into the two rows are counted. Since there are $m^n\cdot n^{m-1}$ ways of distributing the $m-1$ remaining keys in row $A$ into row $B$, and the $n$ keys from row $B$ into row $A$, Proposition~\ref{P:bipartite} is amounts to showing the following:

\begin{Thm} \label{T:simpleBipartite}
Suppose that the keys to $A_2,\dots, A_m$ and $B_1,\dots,B_n$ are placed independently, each according to uniform distribution on the boxes of the opposite row. Then the probability that we can unlock everything starting from key $A_1$ is $1/m$.
\end{Thm}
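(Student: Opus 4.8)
The plan is to collapse the two-row game to an ordinary (single-row) padlock solitaire on the $m$ boxes of row $A$, and then quote Theorem~\ref{T:dependent}. First I would observe that winning is equivalent to opening all of $A_1,\dots,A_m$: once every $A$-box is open, all of the keys $B_1,\dots,B_n$ (which sit in $A$-boxes) have been recovered, so every $B$-box can be opened as well, and conversely winning certainly opens all $A$-boxes. Thus the entire $B$-row can be treated as internal machinery, and the only thing to track is our progress through row $A$.

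Next I would describe the effective location of each $A$-key. The key to $A_i$ (for $i\ge 2$) lies in some box $B_k$, whose own key lies in some box $A_j$; opening $A_j$ lets us open $B_k$ and thereby recover key $A_i$. So I would regard key $A_i$ as ``effectively held'' in the box $A_{\sigma(\tau(i))}$, where $\tau(i)$ names the $B$-box containing key $A_i$ and $\sigma(k)$ names the $A$-box containing key $B_k$. Opening the $A$-boxes, together with the $B$-boxes they unlock, then literally is a padlock solitaire on $A_1,\dots,A_m$ in which box $A_j$ holds the keys $A_i$ with $\sigma(\tau(i))=j$, and this contracted game wins exactly when the bipartite game does.

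The main point, and the step I expect to require the most care, is to check that this contracted distribution meets the hypothesis of Theorem~\ref{T:dependent}, even though it is not the independent uniform one: several $A$-keys that happen to share a $B$-box are forced into a common $A$-box, so the effective locations are genuinely dependent. What saves us is that a uniform choice composed with a uniform choice is again uniform, and only the marginal behaviour is needed. Concretely, conditioning on the contents of the already-opened $A$-boxes only reveals which whole $B$-box groups landed there; for any key $A_i$ not yet recovered, its group was \emph{a priori} assigned to a uniform $A$-box independently of the others, so conditioned on that group avoiding the opened boxes it is uniform over the unopened ones. Hence each unrecovered $A$-key is uniform over the remaining $A$-boxes, which is exactly what Theorem~\ref{T:dependent} asks for.

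With that verified, the conclusion is immediate: the number of not-yet-recovered $A$-keys per unopened $A$-box is a martingale, since opening an $A$-box removes one box and, in expectation, its fair share of the remaining hidden keys. It starts at $(m-1)/m$, equals $1$ if we get stuck (the unopened $A$-boxes then hold exactly their own keys, so hidden keys and unopened boxes coincide) and $0$ if we recover everything. The martingale property therefore identifies $(m-1)/m$ as the probability of losing and $1/m$ as the probability of winning, as claimed in Theorem~\ref{T:simpleBipartite}.
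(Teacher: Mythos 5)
Your proof is correct, but it takes a genuinely different route from the paper's. The paper analyzes the bipartite game directly, playing in ``rounds'' (use all available keys at once, so that you alternate between holding only row-$A$ keys and only row-$B$ keys) and showing that the ratio of available keys to unopened boxes \emph{within the current row} is a martingale across rounds; starting at $1/m$, this gives the result with no reference to earlier theorems. You instead contract row $B$ away: winning is equivalent to opening all of row $A$, key $A_i$ effectively sits in box $A_{\sigma(\tau(i))}$, and the composed placement --- uniform composed with uniform --- satisfies the conditional-uniformity hypothesis of Theorem~\ref{T:dependent}, which then applies verbatim. Your reduction is valid, and its one delicate step is the conditioning argument: strictly speaking, the contents of the opened $A$-boxes reveal only the \emph{union} of the $B$-groups that landed there, not the partition into groups; the clean fix is to condition on the finer information (which groups landed in the opened boxes, where, and their membership), under which the remaining groups are manifestly i.i.d.\ uniform on the unopened boxes, and then average (or note that the conditional law is invariant under permutations of the unopened boxes). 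This is a minor tightening, in line with the paper's own level of rigor. As for what each approach buys: yours is more modular, exhibiting the bipartite count as a corollary of the dependent single-row theorem rather than requiring a new martingale; the paper's round-based ratio martingale, on the other hand, is the one that scales up, since for three or more rows your contraction breaks down --- opening all of row $A$ no longer forces a win, because keys can be locked in cycles among the other rows --- and it is precisely the round/row structure that the paper reuses in Section~\ref{S:multipartite} (via Lemma~\ref{L:multipartite}) and in the nilpotent-matrix arguments.
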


\begin{proof}
The game can be analyzed in ``rounds'', where in each round we use all available keys. This means that we alternate between holding only keys to row $A$ and holding only keys to row $B$. It turns out, by now not surprisingly, that after each round the probability of winning is  
\begin{equation} \label{bipartite} \frac{\text{\# available keys (all from one row)}}{\text{\# unopened boxes (in that row)}}.\end{equation}
This is again because the expected change in \eqref{bipartite} in a round is zero: For each key in the other row, \eqref{bipartite} is precisely the probability of that key being in one of the boxes we're about to open.

Consequently the probability of winning if we start with one out of $m$ keys in row $A$ is $1/m$. 
\end{proof}

There is an amusing puzzle of geometric rigidity described in \cite{BolkerCrapo} and called \emph{Bracing the Grid} in \cite{Puzzles}, that involves spanning trees in bipartite graphs. 

\section{Spanning trees in multi-partite graphs} \label{S:multipartite}
Proposition~\ref{P:bipartite} can be generalized to counting trees in multi-partite graphs. Suppose a set of $n$ labeled vertices are partitioned into $k$ parts of sizes $n_1,\dots, n_k$, where $n = n_1+ \dots +n_k$. The complete $k$-partite graph, denoted $K(n_1,\dots, n_k)$, has an edge between every pair of vertices from distinct parts. This means that its spanning trees are the trees that connect all vertices without any edge between two vertices from the same part. 

The following generalization of Proposition~\ref{P:bipartite} (and of the Cayley formula!) was proved in \cite{Austin} and \cite{Onodera}, see also \cite{KleeStamps, Lewis}.
\begin{Prop} \label{P:multipartite}
The number of spanning trees in the complete $k$-partite graph $K(n_1,\dots, n_k)$ is 
\[ n^{k-2}\cdot(n-n_1)^{n_1-1}\ \cdots\ (n-n_k)^{n_k-1}.\]
\end{Prop}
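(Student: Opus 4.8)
The plan is to set up a multi-partite version of padlock solitaire and apply the non-uniform martingale argument from Theorem~\ref{T:nonuniform} (or rather its spirit), in direct analogy with the bipartite proof of Theorem~\ref{T:simpleBipartite}. I would introduce $n$ boxes, one for each vertex, grouped into $k$ parts of sizes $n_1,\dots,n_k$. As always we keep one key to start, say the key to a box in part~1, and distribute the remaining $n-1$ keys. The crucial modeling choice is the distribution: each key belonging to a box in part~$\ell$ should be placed uniformly into one of the $n-n_\ell$ boxes \emph{not} in its own part, independently of the other keys. The winning positions are then precisely the trees rooted at our starting box that respect the partition, i.e.\ the spanning trees of $K(n_1,\dots,n_k)$ together with a choice of root and orientation. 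Counting the distributions: a key from part~$\ell$ has $n-n_\ell$ possible destinations, so the total number of key arrangements is $\prod_\ell (n-n_\ell)^{(\text{\# keys originating in part }\ell)}$; taking the root into account this should come out to $n^{k-1}\prod_\ell (n-n_\ell)^{n_\ell-1}$ divided appropriately, and the target formula $n^{k-2}\prod_\ell(n-n_\ell)^{n_\ell-1}$ will emerge after multiplying by the winning probability.

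The heart of the argument is to show that the probability of winning, started from a single key, is $1/n$. First I would establish the tree characterization: we recover all keys if and only if the arrangement describes a spanning tree of $K(n_1,\dots,n_k)$ rooted at the starting box, exactly as in Proposition~\ref{P:tree}, the only new feature being that the partition constraint is automatically enforced because no key is ever placed in a box of its own part. Then I would set up the martingale. The natural quantity is again a ratio of hidden keys to unopened boxes, but the multi-partite structure means keys are not exchangeable across parts: a hidden key originating in part~$\ell$ is equally likely to sit in any unopened box outside part~$\ell$, not in any unopened box whatsoever. So the correct martingale must be weighted or tracked part-by-part, and this is where the main obstacle lies.

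The hard part will be finding the right conserved ratio in the multi-partite setting. Unlike the bipartite case, where the two-round alternation made the exchangeability transparent, here a hidden key avoids only its own part, so the conditional probability that it lies among the boxes we are about to open depends on which part those boxes belong to. I expect the resolution to be one of two forms. Either one defines, for each hidden key, its individual probability of eventually being recovered and shows that the \emph{sum} of these probabilities, suitably normalized by the unopened-box counts in each part, is a martingale; or one verifies the hypotheses of Theorem~\ref{T:nonuniform} directly by assigning each key a non-uniform distribution that is constant within the complement of its part and zero inside its part, then checking that the prescribed ratio $\bigl(\sum p_i\,[\text{key }i\text{ locked}]\bigr)/\bigl(\sum p_j\,[\text{box }j\text{ locked}]\bigr)$ remains a martingale. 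The subtlety is that the $p_i$ are key-dependent, so Theorem~\ref{T:nonuniform} as stated (a single distribution $p$ shared by all keys) does not literally apply, and I would need to extend its proof to allow each key its own placement distribution, re-checking that conditioning on opened boxes preserves the proportionality within each key's support.

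Once the martingale is identified and shown to stop at $0$ on a win and at a fixed positive value on a loss, the winning probability follows by the same stopping-value computation as before, yielding $1/n$ from a single starting key. Multiplying the $1/n$ winning probability by the total count of key arrangements then produces the claimed formula $n^{k-2}(n-n_1)^{n_1-1}\cdots(n-n_k)^{n_k-1}$, and I would finish by checking the two sanity cases: $k=2$ recovers Proposition~\ref{P:bipartite}, and $n_1=\dots=n_k=1$ (so $k=n$) recovers the Cayley formula $n^{n-2}$.
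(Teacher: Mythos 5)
Your setup is the same as the paper's (each key from part $\ell$ placed uniformly and independently among the $n-n_\ell$ boxes of the other parts), and you correctly identify the central obstacle: hidden keys are no longer exchangeable across parts, so the plain keys-to-boxes ratio is not a martingale. But your proposed target value is wrong, and this sinks the whole outline. The winning probability in this game is \emph{not} $1/n$; it is
\[
\frac{n^{k-2}}{(n-n_2)(n-n_3)\cdots(n-n_k)},
\]
and your own sanity checks would have caught this. For $k=2$ the displayed value is $1/(n-n_2)=1/n_1$, which matches Theorem~\ref{T:simpleBipartite} --- that theorem gives $1/m$, the reciprocal of the size of the starting \emph{part}, not $1/(m+n)$, so the bipartite model you cite already contradicts the claim of $1/n$. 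For $n_1=\dots=n_k=1$ (the complete graph case), there are $(n-1)^{n-1}$ arrangements, and $\tfrac1n(n-1)^{n-1}$ is not $n^{n-2}$; concretely for $n=3$ one checks by hand that $3$ of the $4$ arrangements win, so the probability is $3/4$, not $1/3$. The bookkeeping fails in general too: the number of arrangements is $(n-n_1)^{n_1-1}\prod_{\ell\ge 2}(n-n_\ell)^{n_\ell}$, and multiplying this by $1/n$ does not produce the claimed formula, whereas multiplying by the displayed probability does.

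Because the true answer is not $1/n$, neither of your two proposed repairs can succeed: no extension of Theorem~\ref{T:nonuniform} to key-dependent placement distributions can help, since any such linear-ratio martingale would stop at values forcing an answer of the form ``share of the starting box,'' which is not the winning probability here. The idea you are missing is the paper's Lemma~\ref{L:multipartite}. Define the per-part ratios $X_i = H_i/(L-L_i)$ (hidden keys to part $i$ over locked boxes outside part $i$). Each $X_i$ is a martingale, and --- crucially --- since keys to different parts are placed independently, every \emph{square-free monomial} in $X_1,\dots,X_k$ is also a martingale. The paper then exhibits the multilinear function
\[
f_k(X_1,\dots,X_k)=(1+X_1)\cdots(1+X_k)\left[1-\frac{X_1}{1+X_1}-\dots-\frac{X_k}{1+X_k}\right],
\]
which is $0$ whenever the game is stuck and $1$ when it is won, hence equals the winning probability from every state; evaluating it at the initial state gives the displayed probability and then the proposition. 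So the conserved quantity is not a weighted linear ratio at all but a genuinely multilinear (indeed determinantal) polynomial in the $k$ ratios; this is the step your outline does not supply, and without it the argument cannot be completed.
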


To obtain a proof using padlock solitaire, we arrange $n$ boxes in $k$ rows with $n_1,\dots, n_k$ boxes in each row respectively. We retain the key to the first box of the first row, and distribute the remaining keys independently, each key uniformly distributed between the boxes \emph{of the other rows}. 

The number of ways of distributing the keys is 
\[  
(n-n_1)^{n_1-1}\cdot (n-n_2)^{n_2}\ \cdots\ (n-n_k)^{n_k},
\] 
and again the winning starting positions correspond exactly to the trees we wish to count. In order to establish Proposition~\ref{P:multipartite}, we therefore want to show that our winning probability is

\begin{equation} \label{eq:multipartite}
\frac{n^{k-2}} { (n-n_2)(n-n_3)\ \cdots\ (n-n_k)} = \frac1n\cdot \frac{n}{n-n_2}\ \cdots \ \frac{n}{n-n_k}. 
\end{equation}

We present a proof based on what might first seem like pulling the expression \eqref{winning} below out of a hat, and only then argue that the formula is quite natural in view of our earlier results.

A \emph{state} of the game of $k$-row padlock solitaire is given by the numbers $H_i$ of \emph{hidden} keys of row $i$, and $L_i$ of locked boxes in row $i$ (counting also those to which we already have the key), for $i=1,\dots, k$.

We define $X_i$ (implicitly depending on the state) as the average number of keys to row $i$ in the boxes of the other rows. Letting $L = L_1+\dots + L_k$, since there are $H_i$ hidden keys to row $i$ and $L-L_i$ boxes where they can be, 
\[ X_i = \frac{H_i}{L-L_i}.\] 
Notice that we can stop the game before any denominator becomes zero: If we have the option of opening the last box of a row when only two rows remain, it's already clear that we are winning. 

\begin{Lemma} \label{L:multipartite}
From an arbitrary state, the winning probability is given by
\begin{equation} \label{winning} f_k(X_1, \dots, X_k) = (1+X_1)\ \cdots \ (1+X_k)\cdot\left[ 1 - \frac{X_1}{1+X_1} - \dots - \frac{X_k}{1+X_k}\right]. \end{equation}
\end{Lemma}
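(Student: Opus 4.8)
The plan is to show that $f_k$, evaluated at the (random) current state, is a bounded martingale under the box-opening dynamics, taking the value $1$ at every state from which we have certainly won and $0$ at every state from which we are stuck. Since the game lasts at most $n$ steps, the stopping time is bounded, so optional stopping identifies $f_k(X_1,\dots,X_k)$ at an arbitrary starting state with the probability of winning from that state, which is exactly the assertion of the lemma.

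First I would record the one-step dynamics. If we open a locked box of row $i$, then $L_i$ and $L$ each drop by one while $H_i$ is unchanged, so that $X_i=H_i/(L-L_i)$ is unchanged; for $j\neq i$ the box contains each still-hidden key of row $j$ with probability $1/(L-L_j)$, independently across keys and across rows, by the hypothesis (as in Theorem~\ref{T:dependent}) that every hidden key is uniform on the locked boxes of the other rows. Writing $r_j$ for the number of row-$j$ keys found, each $r_j$ is binomial with mean $X_j$, the $r_j$ are independent across $j$, and the updated value is $X_j'=(H_j-r_j)/(L-L_j-1)$. A short computation then yields the crucial identity $\mathbb{E}[1+X_j']=1+X_j$ (and likewise $\mathbb{E}[X_j']=X_j$): the drop by one in the denominator exactly compensates the expected drop in the numerator.

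Next I would exploit the shape of the formula. Expanding the bracket gives $f_k=\prod_i(1+X_i)-\sum_m X_m\prod_{l\neq m}(1+X_l)$, so that in every term each row index appears in exactly one factor, either $(1+X_l)$ or $X_l$. Because the updated coordinates are independent across rows and satisfy $\mathbb{E}[1+X_l']=1+X_l$ and $\mathbb{E}[X_l']=X_l$, the expectation passes through each product, and the two sums reassemble to give $\mathbb{E}[f_k']=f_k$. This multilinear structure is precisely what makes the formula ``pulled out of a hat'' behave as a martingale.

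Finally I would verify the boundary values. At a stuck state no key is available, so the nonnegative counts $L_i-H_i$ of available keys per row sum to zero, forcing $H_i=L_i$ for every $i$; then $X_i/(1+X_i)=L_i/L$, these sum to $1$, and the bracket — hence $f_k$ — vanishes. For a won game one plays until only two rows remain, where $f_k$ collapses to $f_2(X_a,X_b)=1-X_aX_b$; as soon as one row, say $a$, has no hidden key (so $X_a=0$) we can open all of row $a$, recover the row-$b$ keys hidden there, and finish, so this is a guaranteed win and indeed $f_2=1$. I expect the genuine care to lie not in the short one-step algebra but in this stopping bookkeeping: one must confirm that every play reaches such a win-state or a stuck state before any denominator $L-L_i$ vanishes, noting that a vanishing denominator would require a single row to remain, which cannot occur at a stuck state and is pre-empted by the two-row win criterion.
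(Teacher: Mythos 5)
Your proof is correct and follows essentially the same route as the paper's: use the square-free (multilinear) expansion of $f_k$ together with independence across rows to get the martingale property, check that $f_k$ equals $0$ at stuck states ($H_i = L_i$ for all $i$) and $1$ at guaranteed-win states, and stop the game before any denominator $L - L_i$ can vanish. Your write-up is merely more explicit than the paper's in places (the binomial one-step computation giving $\mathbb{E}[X_j'] = X_j$, the optional-stopping language, and the two-row win bookkeeping, which matches the paper's remark that one can stop once only two rows remain and the last box of one of them can be opened), but the underlying argument is the same.
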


\begin{proof}
By multiplying out in \eqref{winning}, we see that there aren't really any denominators, and that $f_k$ is a polynomial where each term is square-free. This implies that $f_k$ is a martingale under the operation of opening a box: The hidden keys are equally likely to be in all boxes of the other rows, and therefore the expected change in value of $X_i$ as we open a box is zero. Since the keys to different rows are distributed independently, every product of distinct $X_i$'s has zero expected change too.

Moreover, $f_k$ becomes 0 whenever we get stuck: If $H_i = L_i$ for every $i$, then \[ \frac{X_i}{1+X_i} = \frac{L_i}{L},\] and the rightmost factor  of \eqref{winning} is zero.   

Finally, $f_k$ becomes 1 if we win: If we recover the keys to all rows except one, then all except one of $X_1,\dots,X_k$ become zero. If for instance only $X_1$ remains, then \eqref{winning} becomes   
\[ (1+X_1) \cdot \left[1-\frac{X_1}{1+X_1}\right] = 1.\]

These properties together imply that $f_k$ gives the winning probability from every state. 
\end{proof}

\begin{proof} [Proof of Proposition~\ref{P:multipartite}]
Now we can deduce \eqref{eq:multipartite} by plugging in the starting position where we have one key to row 1 but no other keys. For this state we have $L_i = n_i$, $H_1 = n_1-1$, and $H_i = n_i$ for $i\geq 2$. With these values, \[1+X_i = \frac{n}{n-n_i},\] except that \[1+X_1 = \frac{n-1}{n-n_1}.\]
Therefore \eqref{winning} becomes
\begin{equation} 
\frac{n-1}{n-n_1}\cdot \frac{n}{n-n_2} \ \cdots \ \frac{n}{n-n_k}\cdot\left[1 - \frac{n_1-1}{n-1} - \frac{n_2}{n} - \cdots - \frac{n_k}{n}\right],
\end{equation}
which simplifies to \eqref{eq:multipartite}. 
\end{proof}

Let us briefly comment on how one might arrive at the formula \eqref{winning}. In view of Theorem~\ref{T:dependent} we might suspect that the winning probability, also in the multi-row version, remains the same even if to some extent the keys are not distributed independently. In particular, we might conjecture that the winning probability remains the same under a \emph{key-ring assumption}: conditioning on keys from the same row always ending up in the same box. We can think of this as grouping the keys on key-rings, one for each row, and then distributing the key-rings independently, each to a box chosen uniformly among the unopened boxes of the other rows.  

In retrospect we can see that indeed \eqref{winning} gives the probability of winning also under the key-ring assumption. All we need for the proof of Lemma~\ref{L:multipartite} to work is that each hidden key is equally likely to be in any of the unopened boxes of the other rows, and that every set of keys \emph{from distinct rows} are distributed independently of each other. Under any such scheme, square-free monomials in $X_1,\dots, X_k$ are martingales. 

But if the keys are arranged on key-rings, we win precisely when no set of them are locked cyclically into each other's rows (in boxes to which we don't already have the key). 

Looking at the case that, say, key-ring 1 is locked into row 2, key-ring 2 is locked into row 3, and key-ring 3 into row 1, we see that the probability that this happens is 
\[ \frac{H_2}{L - L_1} \cdot \frac{H_3}{L - L_2}\cdot \frac{H_1}{L - L_3},\] since for instance there are $H_2$ boxes in row 2 to which we don't have the key, and key-ring 1 can be placed anywhere except in row 1. Cyclically shifting the numerators, we can write this as 
\[\left( \frac{H_1}{L - L_1}\right) \cdot \left(\frac{H_2}{L - L_2}\right) \cdot \left(\frac{H_3}{L - L_3}\right) = X_1X_2X_3.\]
Similarly every probability of cyclically locking in a set of key-rings can be expressed as a square-free monomial in $X_1,\dots, X_k$, and by inclusion-exclusion there must be a formula for the probability that there is no such cycle. 

For $k=2$ and $k=3$ these formulas are $1-X_1X_2$ and $1-X_1X_2 - X_1X_3 - X_2X_3 - 2X_1X_2X_3$ respectively, from which we can guess that the general form is \eqref{winning}. 

The formula \eqref{winning} can also be written as the determinant
\[ \det 
\begin{pmatrix}
1 - X_{1} & -X_{1} & \cdots & -X_{1}\\
-X_{2} & 1- X_{2} & \cdots & -X_{2}\\
\vdots & \vdots & \ddots & \vdots \\
-X_{k} & -X_{k} & \cdots & 1-X_{k}
\end{pmatrix}
\] 
but now we have come almost full circle and are about to rediscover the matrix tree theorem!

\section{Nilpotent matrices over finite fields}

A square matrix is \emph{nilpotent} if some power of it is zero. The following was proved by Nathan Fine and Israel Herstein \cite{FineHerstein} in 1958: 
\begin{Prop} \label{P:nilpotent}
The number of nilpotent $n\times n$-matrices over the field $F_q$ of $q$ elements is
\[ q^{n(n-1)}.\]
\end{Prop}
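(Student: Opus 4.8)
The plan is to sidestep a direct analysis of nilpotency -- which is delicate over $F_q$ because cancellation lets a matrix with a ``cyclic'' support pattern still be nilpotent -- and instead to count \emph{all} $n\times n$ matrices by peeling off their nilpotent part. The tool is the Fitting decomposition: every linear map $A\colon F_q^{\,n}\to F_q^{\,n}$ determines a canonical splitting $F_q^{\,n}=V_0\oplus V_1$ with $V_0=\ker A^n$ and $V_1=\mathrm{im}\,A^n$, where both summands are $A$-invariant, $A$ restricts to a nilpotent map on $V_0$ and to an invertible map on $V_1$. This yields a bijection between all matrices and quadruples $(V_0,V_1,N,M)$, where $(V_0,V_1)$ is an ordered pair of complementary subspaces, $N$ is nilpotent on $V_0$ and $M$ invertible on $V_1$: from such data one reconstructs the block map $N\oplus M$, whose Fitting decomposition is exactly $(V_0,V_1)$.

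Writing $N_k$ for the number of nilpotent operators on a $k$-dimensional space (so $N_n$ is the goal), and recalling that the number of ordered complementary pairs with $\dim V_0=k$ is $\binom{n}{k}_q q^{k(n-k)}$ (choose $V_0$, then one of its $q^{k(n-k)}$ complements), counting matrices according to $k=\dim V_0$ gives the recursion
\[ q^{n^2}=\sum_{k=0}^n \binom{n}{k}_q q^{k(n-k)}\,N_k\,\bigl|GL_{n-k}(F_q)\bigr|, \]
which determines $N_n$ from $N_0,\dots,N_{n-1}$. I would then prove $N_n=q^{n(n-1)}$ by strong induction, the base case $N_0=1$ being immediate. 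Substituting $N_k=q^{k(k-1)}$ for $k<n$, isolating the $k=n$ term (whose pair-count is $1$ and with $\bigl|GL_0(F_q)\bigr|=1$), and using $k(n-k)+k(k-1)=k(n-1)$, the desired value $N_n=q^{n(n-1)}$ becomes equivalent to the closed identity
\[ \sum_{k=0}^n \binom{n}{k}_q q^{k(n-1)}\,\bigl|GL_{n-k}(F_q)\bigr| = q^{n^2}. \]

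This $q$-binomial identity is the one genuinely computational ingredient, and I expect it to be the main obstacle; I would settle it by a short induction on $n$ (or by matching it to the $q$-binomial theorem), with the small cases $n=1$ (giving $q-1+1=q$) and $n=2$ (giving $q^4$) as reassurance. In the spirit of the rest of the paper one would prefer a direct ``solitaire'' reading in which $q^{-n}$ appears as the winning probability of a uniformly random matrix; the reason I instead route through the Fitting splitting is precisely that, unlike the earlier games, nilpotency is \emph{not} a combinatorial property of the support digraph, so the martingale bookkeeping of hidden keys has to be replaced by the linear-algebraic decomposition above.
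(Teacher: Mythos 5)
Your proposal is correct in structure but takes a genuinely different route from the paper. The paper never leaves its solitaire framework: it sets up a game with $q^n$ boxes, one per vector $v\in F_q^n$, puts the key to box $v$ into box $Av$ for a uniformly random matrix $A$, and keeps the key to the zero box; unlocking everything is then exactly the statement that iterating $v\mapsto Av$ sends every vector to $0$, i.e.\ that $A$ is nilpotent. The work there is a conditional-uniformity lemma: opening boxes in rounds, the opened boxes always form a subspace $U$ with recovered keys $K=f^{-1}(U)$, and (by extending a basis of $K$ and counting compatible maps) each hidden key is uniformly distributed over the unopened boxes, so the keys-to-boxes ratio is a martingale, the winning probability is $1/q^n$, and the count is $q^{n^2}/q^n=q^{n(n-1)}$. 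Your route via the Fitting decomposition is the classical counting proof; it buys freedom from any probabilistic apparatus, at the price of the $q$-identity you flag. That identity is true, but be warned that a naive induction on your stated form does not close: Pascal's rule for Gaussian binomials spawns sums with shifted exponents $q^{k(n-2)}$, $q^{k(n-3)}$, forcing a two-parameter strengthening. The fix is to count the complementary pairs by orbit--stabilizer instead: $GL_n(F_q)$ acts transitively on ordered pairs $(V_0,V_1)$ with $\dim V_0=k$, with stabilizer $GL_k\times GL_{n-k}$, so the pair count is $|GL_n(F_q)|/\bigl(|GL_k(F_q)|\,|GL_{n-k}(F_q)|\bigr)$. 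Your recursion then reads
\[ \frac{q^{n^2}}{|GL_n(F_q)|} \;=\; \sum_{k=0}^n \frac{N_k}{|GL_k(F_q)|}, \]
valid for every $n$ directly from the bijection; subtracting the instance at $n-1$ from the instance at $n$ eliminates all lower terms at once and gives $N_n = q^{n^2} - q^{(n-1)^2}(q^n-1)q^{n-1} = q^{n(n-1)}$, with no $q$-binomial lemma and no induction needed.

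One correction to your closing remark: the reason you give for abandoning the solitaire method is mistaken. Nilpotency is indeed not a property of the support digraph of the $n\times n$ matrix, but the paper's game is not played on that digraph; it is played on the functional graph of $v\mapsto Av$ on all $q^n$ vectors, where nilpotency \emph{is} precisely the combinatorial winning condition. The martingale bookkeeping survives because conditional uniformity can be reestablished after each round of openings. So your decomposition is a legitimate alternative, not a necessity.
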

According to Brouwer, Gow and Sheekey \cite{BrouwerGowSheekey}, a proof was given in lecture notes by Philip Hall already in 1955. 

It was pointed out in \cite{Crabb} that Proposition~\ref{P:nilpotent} can be regarded as a generalization of the Cayley formula, where trees correspond to nilpotent mappings of vector spaces over the mythical ``field of one element'' $F_1$. 

We give a virtually calculation-free proof by setting up a game of padlock solitaire with $q^n$ boxes, one for each element of the vector space $V = F_q^n$. We distribute the keys by choosing uniformly a random $n$ by $n$ matrix $A$ and putting the key to box $v$ into the box $f(v) = A\cdot v$, except that we keep the key to the zero box.

Notice that we can unlock all boxes if and only if the matrix $A$ is nilpotent so that iterating the function $f$ eventually leads to mapping everything to the zero box. Since there are $q^{n^2}$ matrices of dimension $n$ by $n$, establishing Proposition~\ref{P:nilpotent} amounts to showing that the winning probability is $1/q^n$. 
This is the reciprocal of the number of boxes, and therefore precisely what we would expect in view of Theorems~\ref{T:simple} and \ref{T:dependent}, but distributing the keys according to a linear function doesn't quite satisfy the conditions of these theorems.  

To establish a martingale property of the keys-to-boxes ratio, we open the boxes in rounds as in Section~\ref{S:bipartite} (and this time it's actually necessary) where we simultaneously open all the boxes to which we have the key. Identifying keys and boxes with elements of $V$, at every stage the set $K$ of recovered keys is a linear space, and the set $U$ of already opened boxes is a linear subspace, except at the very beginning when $U$ is empty. 

For the very first step, it's clear that each nonzero key has probability $1/q^n$ of being in the zero box, since as soon as a vector $v$ has a nonzero coordinate, the product $A\cdot v$ will be uniformly distributed over $V$. 

At a generic stage where $U$ is nonempty, we have $U\subseteq K$ and $K = f^{-1}(U)$. Suppose that $k = \dim(K)$ and that $v_1,\dots, v_k$ is a basis for $K$. Having opened the boxes of $U$, we know all values $f(v)$ for $v\in K$, and they are determined by $f(v_1),\dots, f(v_k)$. Moreover we know that no vector outside $K$ is mapped to $U$. 

If we extend the basis $v_1,\dots, v_k$ for $K$ to a basis $v_1,\dots, v_n$ for the whole space $V$, then the functions $f$ which are compatible with what we have seen in the boxes of $U$ are those where $f(v_{k+1}) \notin U$, $f(v_{k+2})$ is not in the span of $U$ and $f(v_{k+1})$, and so on, generally $f(v_{k+i})$ not belonging to the span of $U$ and $f(v_{k+1}),\dots, f(v_{k+i-1})$.

We could write down an expression for the number of such functions $f$ in terms of the dimensions of $K$ and $U$, but all we need is that the number of options for $f(v_{k+i})$ once that $f(v_{k+1}),\dots, f(v_{k+i-1})$ have been chosen is the same regardless of those earlier choices. 

This means that regardless of how we choose $f(v_{k+1})$ among the elements of $V-U$, the number of functions satisfying what we already know will be the same. In particular, given what we know, $f(v_{k+1})$ is just as likely to be a given element of $K$ as a given element not in $K$.    

Since we can choose $v_{k+1}$ to be any element we like outside $K$, this shows that after a complete round, all hidden keys are distributed uniformly between the unopened boxes. This establishes the martingale property of the average number of keys in the unopened boxes, and thereby Proposition~\ref{P:nilpotent}.

\section{Nilpotent matrix products}
Finally we prove a result that we haven't found in the literature.
The following is a bipartite version of Proposition~\ref{P:nilpotent} in the same way that Proposition~\ref{P:bipartite} is a bipartite version of the Cayley formula.

\begin{Prop} \label{P:bipartiteNilpotent}
Let $A$ and $B$ be random matrices of dimensions $m$ by $n$ and $n$ by $m$ respectively, over the same finite field $F_q$ and chosen independently and uniformly over all such matrices. Then the probability that the product $AB$ is nilpotent is 
\begin{equation} \label{eq:bipartiteNilpotent}
\frac1{q^m} + \frac1{q^n} -\frac1{q^{m+n}}.
\end{equation}
\end{Prop}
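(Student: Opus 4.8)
The plan is to set up a bipartite padlock game as in Section~\ref{S:bipartite}, but with the keys distributed linearly as in the proof of Proposition~\ref{P:nilpotent}. Take two rows of boxes indexed by $V=F_q^m$ and $W=F_q^n$, and read $A$ as a map $W\to V$ and $B$ as a map $V\to W$. The key to box $v\in V$ goes into box $Bv\in W$, and the key to box $w\in W$ into box $Aw\in V$. Opening box $0_V$ yields the keys in $\ker A$, then those in $B^{-1}(\ker A)=\ker(AB)$, and iterating one sees that the reachable $V$-boxes are exactly $\ker(AB),\ker((AB)^2),\dots$, filling $V$ precisely when $AB$ is nilpotent; symmetrically the $W$-boxes fill precisely when $BA$ is nilpotent. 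Since $AB$ and $BA$ share their nonzero eigenvalues, one is nilpotent iff the other is, so we recover all keys exactly when $AB$ is nilpotent. Establishing \eqref{eq:bipartiteNilpotent} thus amounts to showing that the losing probability is $(1-q^{-m})(1-q^{-n})$.

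The quantities to watch are $R_V=H_V/N_W$ and $R_W=H_W/N_V$, where $H_V,H_W$ count the still-hidden keys of each row and $N_W,N_V$ the unopened boxes of the opposite row (the $V$-keys live among the $W$-boxes and conversely). I would play in rounds, as in the proof of Theorem~\ref{T:simpleBipartite}, opening at each stage all boxes of one row to which we hold keys; then $R_V$ changes only in rounds that open $W$-boxes, and $R_W$ only in rounds that open $V$-boxes. The decisive input, established exactly as in the proof of Proposition~\ref{P:nilpotent}, is that after each complete round the hidden $V$-keys are, conditionally on everything seen, uniformly distributed over the unopened $W$-boxes, and symmetrically for the hidden $W$-keys; independence of $A$ and $B$ guarantees that both statements hold at once. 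A hypergeometric calculation then makes $R_V$ a martingale across every $W$-round and $R_W$ across every $V$-round. Since in any single round only one factor moves while the other is frozen and measurable, the product $\Phi=R_V R_W$ is a martingale.

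When we get stuck we have $U_V=B^{-1}(U_W)$ and $U_W=A^{-1}(U_V)$ for the opened subspaces, so $H_V=N_V=q^m-|U_V|$ and $H_W=N_W=q^n-|U_W|$, the two ratios are reciprocal, and $\Phi=1$; when we win, one row exhausts its hidden keys first and $\Phi\to 0$. Hence $\Phi$ starts at the losing probability. The one point needing care — and the reason a naive start gives the wrong value — is calibrating the initial state: I would keep the keys to \emph{both} zero boxes $0_V$ and $0_W$, not just one. This changes neither reachability (the key $0_W$ sits in box $0_V$ and is found at once in any case) nor the winning condition, but it makes the hidden keys of each row exactly the nonzero ones, each uniform over the opposite row. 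With this calibration $R_V=(q^m-1)/q^n$ and $R_W=(q^n-1)/q^m$ at the outset, so $\Phi=(1-q^{-m})(1-q^{-n})$ is the losing probability, and the winning probability is the complementary $q^{-m}+q^{-n}-q^{-m-n}$.

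I expect the main obstacle to be the round-based uniformity lemma: showing that, conditionally on all linear information extracted so far (on the $W$-side, knowledge of $B$ on the subspace $B^{-1}(U_W)$), the remaining hidden keys of a row are genuinely uniform over the unopened boxes of the other row. This is the linear-algebra heart of the argument, directly analogous to the claim in the proof of Proposition~\ref{P:nilpotent} that, having opened a subspace of boxes, the image of a fresh basis vector is equidistributed over the complement; the only new feature is that two independent maps are revealed in tandem, so one must verify that conditioning on the history of one does not disturb the uniformity governed by the other, which is precisely where the independence of $A$ and $B$ enters.
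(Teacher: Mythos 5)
Your proposal is correct and follows essentially the same route as the paper: the same setup with boxes indexed by the two vector spaces and both zero-box keys retained, the same round-based opening with the uniformity argument imported from the proof of Proposition~\ref{P:nilpotent}, the same martingale $X_1X_2$ (the paper uses $1-X_1X_2$), and the same initial-state computation giving $(1-q^{-m})(1-q^{-n})$ as the losing probability. The point you flag as needing care --- keeping both zero keys so the hidden keys are exactly the nonzero ones --- is precisely what the paper does.
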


\begin{proof}
Let $V_1$ and $V_2$ be vector spaces over $F_q$ of dimensions $m$ and $n$ respectively. The matrix product $AB$ is nilpotent if and only if iterating the corresponding linear mappings back and forth between $V_1$ and $V_2$ eventually leads to the zero function. It doesn't matter where we start, and in particular $AB$ is nilpotent if and only if $BA$ is.   

We set up a game of padlock solitaire with one box for every element of $V_1$ and one for every element of $V_2$. We keep the keys to the two zero boxes and distribute the remaining keys from each vector space into the boxes of the other one by the random linear functions given by $A$ and $B$. We recover all the keys if and only if the composition of $A$ and $B$  (in any order) is nilpotent.  

Again we open the boxes in rounds, and for the same reason as in the proof of Proposition~\ref{P:nilpotent}, after opening a linear subspace of boxes in each of the two spaces, each hidden key will be distributed uniformly between all unopened boxes of the other space. 

We let $X_1$ and $X_2$ be the average number of keys from one space in the unopened boxes of the other one respectively. Opening the boxes in rounds, it follows as in Section~\ref{S:multipartite} that the quantity 
\[ 1 - X_1X_2\] 
is a martingale which ends at 1 if we win and 0 if we lose. 

Since we start by distributing the $q^m-1$ nonzero keys from $V_1$ into the $q^n$ boxes of $V_2$, and the $q^n-1$ nonzero keys from $V_2$ into the $q^m$ boxes of $V_1$, the initial winning probability is 
\[
1 - \frac{q^m-1}{q^n}\cdot \frac{q^n-1}{q^m},\]
which simplifies to \eqref{eq:bipartiteNilpotent}.
\end{proof}

\end{document}